\newtheorem{theorem}{Theorem}
\newtheorem{prop}[theorem]{Proposition}
\newtheorem{lemma}[theorem]{Lemma}
\newtheorem{rem}[theorem]{Remark}
\newtheorem{exmp}[theorem]{Example}
\begin{document}
\title{Maximal cliques in the graph of $5$-ary simplex codes of dimension two}
\author{Mariusz Kwiatkowski, Andrzej Matra\'s, Mark Pankov, Adam Tyc}
\subjclass[2020]{51E20, 51E22.}
\address{Faculty of Mathematics and Computer Science, 
University of Warmia and Mazury, S{\l}oneczna 54, 10-710 Olsztyn, Poland}
\email{mkw@matman.uwm.edu.pl, matras@uwm.edu.pl, pankov@matman.uwm.edu.pl \\ adam.tyc@matman.uwm.edu.pl}

\maketitle

\begin{abstract}
We consider  the induced subgraph of the corresponding Grassmann graph formed by $q$-ary simplex codes of dimension $2$, $q\ge 5$. 
This graph contains precisely two types of maximal cliques.
If $q=5$, then for any two maximal cliques of the same type there is a monomial linear automorphism transferring one of them to the other.
Examples concerning the cases $q=7,11$ finish the note.
\end{abstract}

\section{Introduction}
The Grassmann graph formed by $k$-dimensional subspaces of an $n$-dimensional vector space over the $q$-element field 
(see, for example, \cite[Section 9.3]{BCN}) can be naturally identified with the graph of $q$-ary linear codes of dimension $k$ and length $n$,
where two distinct codes are connected by an edge if they have the maximal number of common codewords
(for $k=1,n-1$ this graph is complete and we assume that $1<k<n-1$). 
In practice, only non-degenerate linear codes are interesting, since degenerate codes can be reduced to 
non-degenerate codes of shorter length by removing zero columns from generator matrices.
The induced subgraphs of Grassmann graphs consisted of all non-degenerate linear codes 
and subgraphs formed by various classes of non-degenerate linear codes (projective and simplex codes, codes with lower bounded
minimal dual distance) are considered in 
\cite{BM,KP1,KP2,P1,P2} and \cite{CGK,CG,KPP, KP2020,KP2023}, respectively.

In this note, we continue the investigation of maximal cliques (maximal complete subgraphs) in the graphs of simplex codes  started at \cite{KP2023}.

Every maximal clique of the Grassmann graph is 
a star (formed by all $k$-dimen\-sional subspaces containing a fixed $(k-1)$-dimensional subspace)
or a top (consisting of all $k$-dimensional subspaces contained in a certain $(k+1)$-dimensional subspace).
If $\Gamma$ is an induced subgraph of the Grassmann graph, then the intersection of the vertex set of $\Gamma$
with a star or a top is a not necessarily  maximal clique of $\Gamma$ (there are examples of such non-maximal cliques).
This clique  is called a star or, respectively, a top of $\Gamma$ only in the case when it is maximal.

Maximal cliques in the graphs of non-degenerate codes are studied in \cite{KP2} and later in \cite{BM}. 
Some of their properties are used to determining graph automorphisms \cite{KP2}
and embeddings in Grassmann graphs \cite{P1,P2}.

The graph $\Gamma^s(k,q)$ formed by $q$-ary simplex codes of dimension $k$ 
contains stars and for any two stars there is a monomial linear automorphism transferring one of them to the other. 
All maximal cliques of $\Gamma^s(3,2)$ and $\Gamma^s(2,3)$ are stars and tops simultaneously
and  every maximal clique of $\Gamma^s(2,4)$ is a star. 
By \cite{KP2023}, the graph $\Gamma^s(k,q)$ contains tops for each of the remaining cases;
furthermore, there are tops with distinct numbers of elements if $k\ge 4$ and $q\ge 3$.

We use a point-line geometry approach to study $q$-ary simplex codes of dimension $2$
and consider the subgeometry of the corresponding projective space formed by all simplex points,
i.e. $1$-dimensional subspaces containing codewords of simplex codes, and simplex lines corresponding to simplex codes.
The inversion transformation $I$ does not send lines to lines; however, it preserves the binary collinearity relation of simplex points.
We exploit this observation  to construct tops in $\Gamma^s(2,q)$, $q\ge 5$
(the arguments from \cite{KP2023} are more complicated).

Our main result concerns the graph of $5$-ary simplex codes of dimension $2$. We show that every top of this graph contains precisely $4$ elements and for any two tops there is a monomial linear automorphism transferring one of them to the other.
The crucial tool is the following characterization of the collinearity relation based on the inversion transformation $I$ for the case $q=5$:
if $P,Q$ are collinear simplex points and $\langle P,Q\rangle$ is the simplex line containing them, 
then $$I(\langle I(P),I(Q)\rangle)\setminus\{P,Q\}$$
is the set of all simplex points which are collinear to both $P,Q$ and not on the line $\langle P,Q\rangle$. 
For $q=7,11$ this fails (see examples at the end of the note).
Determining of the set of all simplex points collinear to both $P,Q$ in the general case is an open problem.

\section{Simplex codes}
Let ${\mathbb F}={\mathbb F}_q$ be the field of $q$ elements.
Consider the $n$-dimensional vector space $V={\mathbb F}^n$ over this field.
Subspaces of $V$ are $q$-{\it ary linear codes} and vectors belonging to a linear code $C$ are {\it codewords} of $C$.
Recall that a {\it generator matrix} of a linear code is any matrix whose rows form a basis of this code.

Suppose that 
\begin{equation}\label{eq0}
n=\frac{q^{k}-1}{q-1},
\end{equation}
i.e. the dimension of $V$ is equal to the number of points in a $(k-1)$-dimensional projective space over ${\mathbb F}$
(the number of $1$-dimensional subspaces of ${\mathbb F}^k$).
A $q$-{\it ary simplex code of dimension} $k$ is a $k$-dimensional linear code $C\subset V$ such that 
every generator matrix of $C$ has the following property:
\begin{enumerate}
\item[(*)] all columns are non-zero and mutually non-proportional.
\end{enumerate}
The condition (*) together with \eqref{eq0} imply  the existence of  
a one-to-one correspondence between matrix columns and points of a $(k-1)$-dimensional projective space over ${\mathbb F}$.
If (*) is satisfied for one generator matrix of $C$, then it holds for all generator matrix of this code.

Non-zero codewords of  $q$-ary simplex codes of dimension $k$  are of Hamming weight $q^{k-1}$  \cite[Theorem 2.7.5]{HP}.
Furthermore, such codes can be characterized as linear codes $C\subset V$ maximal with respect to this property 
(this is a simple consequence of a well-known fact concerning  equidistant codes \cite[Theorem 7.9.5]{HP}).

A linear automorphism of $V$ is {\it monomial} if there exist non-zero $a_1,\dots,a_n\in {\mathbb F}$
and a permutation $\sigma$ on $\{1,\dots,n\}$ such that 
this automorphism transfers every $(x_1,\dots,x_n)$ to 
$$(a_1x_{\sigma(1)},\dots,a_nx_{\sigma(n)}).$$
Such linear automorphisms preserve the Hamming weight and, consequently, they send simplex codes to simplex codes.
It is clear that for any two vectors of $V$ with the same Hamming weight there is a monomial linear automorphism of $V$
transferring one of these vectors to the other. 
This implies that every vector of $V$ with Hamming weight $q^{k-1}$ is a codeword in a certain simplex code. 
Furthermore, for any two $q$-ary simplex codes of dimension $k$ there is a monomial linear automorphism of $V$
transferring one of these codes to the other. 
The latter easy follows from MacWillams theorem which states that every linear isomorphism between two linear codes of $V$
preserving the Hamming weight  can be extended to a monomial linear isomorphism of $V$.

From this moment, we assume that $k=2$. 
Then \eqref{eq0} implies that $n=q+1$.

We use the point-line geometry language.
Let  ${\mathcal P}(V)$ be the projective space associated to $V$.
Recall that points of ${\mathcal P}(V)$ are $1$-dimensional subspaces $\langle x \rangle$, $x\in V\setminus\{0\}$ and lines 
correspond to $2$-dimensional subspaces of $V$. The line containing distinct points $P,Q\in {\mathcal P}(V)$ is denoted by 
$\langle P,Q\rangle$.
Lines of ${\mathcal P}(V)$ corresponding to $q$-ary simplex codes of dimension $2$ will be called {\it simplex lines} 
and points belonging to such lines are said to be {\it simplex points}.
A point of ${\mathcal P}(V)$ (a $1$-dimensional subspace of $V$) is simplex if and only if 
it contains a vector whose coordinates are non-zero except one
(a non-zero codeword of a certain $q$-ary simplex code of dimension $2$).

Consider the subgeometry of ${\mathcal P}(V)$ formed by all simplex points and all simplex lines.
Two distinct simplex points $\langle x\rangle, \langle y \rangle$ are collinear in this geometry, i.e. there is a simplex line containing them,
if and only if the columns of  the generator matrix $\genfrac[]{0pt}{2}{x}{y}$ are non-zero and mutually non-proportional.
In what follows, we will say that two distinct simplex points are {\it collinear} if they are collinear in the above geometry.

\section{Main result}
Let $\Gamma_2(V)$ be the Grassmann graph whose vertices are lines of  ${\mathcal P}(V)$
($2$-dimensional subspaces of $V$) and two lines are connected by an edge if they are {\it adjacent}, i.e.  intersecting in a point
(a $1$-dimensional subspace of $V$). 
This is a connected graph with maximal distance  $2$
(for any two disjoint lines there is a line adjacent to each of them). 
Every maximal clique of $\Gamma_2(V)$ is of one of the following two types:
\begin{enumerate}
\item[$\bullet$] the {\it star} formed by all lines passing through a fixed point;
\item[$\bullet$] the {\it top} consisting of all lines contained in a fixed projective plane.
\end{enumerate}

Denote by $\Gamma$ the subgraph of $\Gamma_2(V)$ induced by the set of all simplex lines 
(in \cite{KP2023} the graph of $q$-ary simplex codes of dimension $k$ is denoted by $\Gamma^s(k,q)$ and, consequently, 
$\Gamma=\Gamma^s(2,q)$). 
This graph is connected \cite[Proposition 4]{KP2023}. 
The intersection of the set of simplex lines with a star or a top is a clique of $\Gamma$ (if it is not empty);
such a clique need not to be maximal. We say that this intersection is a {\it star} of $\Gamma$ or, respectively, a {\it top} of $\Gamma$
only when it is a maximal clique of $\Gamma$. 

\begin{rem}{\rm
By \cite[Example 7]{KP2023}, 
for every $q\ge 4$  there are tops of $\Gamma_2(V)$ intersecting the set of simplex lines in non-maximal cliques of $\Gamma$
(every such intersection is a proper subset of a certain star of $\Gamma$).
}\end{rem}

It is clear that a point of ${\mathcal P}(V)$ defines a star of $\Gamma$ if and only if it is a simplex point.
Therefore, for any two stars of $\Gamma$ there is a monomial linear automorphism of $V$ 
transferring one of these stars to the other.
Every star of $\Gamma$ consists of  $(q -1)!$ lines \cite[Proposition 1]{KP2020}.  
If $q=4$, then every maximal clique of $\Gamma$ is a star \cite[Proposition 3]{KP2020}. 

We present a new proof of the following statement obtained in \cite{KP2023}.

\begin{theorem}\label{theorem-top-gen}
For every $q\ge 5$ the graph $\Gamma$ contains tops.
\end{theorem}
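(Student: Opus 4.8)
The plan is to reduce the existence of a top to the existence of a single well-chosen configuration of three points, and then to produce that configuration explicitly with the help of the inversion transformation $I$, where $I$ sends a point to the one obtained by inverting all its nonzero coordinates. Call three simplex points $P,Q,R$ \emph{admissible} if they are pairwise collinear but do not lie on a common line. Given such a triple, the three simplex lines $\langle P,Q\rangle,\langle Q,R\rangle,\langle P,R\rangle$ lie in the plane $U=\langle P,Q,R\rangle$ and form a triangle: they pairwise meet but are not concurrent. Let $T$ be the set of all simplex lines contained in $U$, i.e. the intersection of the simplex lines with the top of $\Gamma_2(V)$ determined by $U$; this is a clique of $\Gamma$.

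I claim $T$ is a top of $\Gamma$. Since $T$ contains a triangle, it is not contained in any star of $\Gamma_2(V)$. And if a simplex line $\ell$ were adjacent to every member of $T$, then $\ell$ would meet each of the three non-concurrent triangle lines; the three intersection points cannot coincide, so $\ell\cap U$ contains at least two distinct points and therefore equals $\ell$, giving $\ell\subset U$ and $\ell\in T$. Hence $T$ is a maximal clique of $\Gamma$ contained in the top of $U$, that is, a top of $\Gamma$. It therefore suffices to exhibit an admissible triple for each $q\ge5$.

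To build one, index the $q+1$ coordinates by $\{1,\dots,q+1\}$ and put $P=\langle(0,1,\dots,1)\rangle$, so that $I(P)=P$. Take $Q=\langle(1,0,q_3,\dots,q_{q+1})\rangle$ with $\{q_3,\dots,q_{q+1}\}={\mathbb F}_q^{*}$; then $P,Q$ are collinear. Because $\{q_i^{-1}\}={\mathbb F}_q^{*}$ as well, $P=I(P)$ and $I(Q)$ are collinear, so $\ell'=\langle I(P),I(Q)\rangle=\langle P,I(Q)\rangle$ is a simplex line. Choose a simplex point $S=\langle\alpha P+\beta I(Q)\rangle$ on $\ell'$ with $\alpha,\beta\ne0$; its unique zero sits in the position $i_0\ge3$ with $q_{i_0}=-\beta/\alpha$. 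Set $R=I(S)$, which has its zero in position $i_0$. The pairwise collinearity of $P,Q,R$ follows either from the preservation of collinearity by $I$ (so $R=I(S)$ is collinear with $I(I(P))=P$ and $I(I(Q))=Q$), or directly by checking that for each of the three pairs the ratios of the corresponding coordinates exhaust ${\mathbb F}_q^{*}$.

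The only real point is to verify admissibility, namely $R\notin\langle P,Q\rangle$, and this is where the main obstacle lies and where the hypothesis $q\ge5$ enters. Comparing $R$ with the unique point of $\langle P,Q\rangle$ whose zero is in position $i_0$, the proportionality constant forced by positions $1$ and $2$ is $1/(\alpha\beta)$, and matching a coordinate $i\ge3$ with $i\ne i_0$ reduces to $(\alpha q_i+\beta)^2=\alpha\beta\,q_i$, equivalently $w^2+w+1=0$ for $w=\alpha q_i/\beta$. This equation has at most two solutions in ${\mathbb F}_q$, whereas $R\in\langle P,Q\rangle$ would require it to hold at all $q-2$ positions $i\ge3$, $i\ne i_0$. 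For $q\ge5$ we have $q-2\ge3>2$, so some coordinate breaks the proportionality, $R\notin\langle P,Q\rangle$, and $P,Q,R$ is admissible. The same count explains why the construction breaks down for $q=4$, matching the known fact that every maximal clique of $\Gamma^s(2,4)$ is a star, which is a reassuring consistency check on the whole approach.
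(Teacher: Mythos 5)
Your proof is correct, and while it rests on the same central idea as the paper's --- apply the inversion $I$ to a simplex line in order to manufacture a third simplex point collinear to both $P$ and $Q$ but not on $\langle P,Q\rangle$ --- the way you certify non-collinearity of the triple is genuinely different. The paper proves the stronger statement (2) of Proposition \ref{propF} (the $I$-images of \emph{any} three distinct points of a simplex line span a plane), which it needs again in the $q=5$ analysis; to do so it first reduces to a canonical triple via MacWilliams' theorem (Lemma \ref{lemma3p}) together with the commutation relation $Il=l'I$ (Lemma \ref{lemmaF}), and then runs a determinant computation split according to the characteristic, the obstruction being $\alpha^2+\alpha+1=0$ for a primitive element $\alpha$. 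You instead compare $R=I(S)$ directly with the unique point of $\langle P,Q\rangle$ whose zero sits in the same position, reduce coordinatewise proportionality to $w^2+w+1=0$ with $w=\alpha q_i/\beta$, and observe that a quadratic cannot vanish at the $q-2\ge 3$ distinct required values of $w$. This is uniform in the characteristic, needs no normal form (hence no appeal to MacWilliams), and cleanly isolates why $q=4$ fails ($q-2=2$, and $w^2+w+1$ has exactly two roots in $\mathbb{F}_4$); it is a pleasant coincidence that the same polynomial $w^2+w+1$ is the obstruction in both arguments. What your route does not yield is the full strength of Proposition \ref{propF}(2), which the paper reuses later. You also spell out why the resulting clique is maximal (a triangle of non-concurrent lines forces any common neighbour into the plane), a step the paper leaves implicit; it is correct and worth recording.
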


Our main result is the following.

\begin{theorem}\label{theorem-top-q5}
Suppose that $q=5$. 
Then every top of $\Gamma$ contains precisely $4$  lines.
For any two tops there is a monomial linear automorphism of $V$ 
transferring one of these tops to the other. 
Every line is contained in precisely $20$ tops.
\end{theorem}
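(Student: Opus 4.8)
The plan is to fix a single simplex line and reduce everything to its combinatorics. Since the monomial group acts transitively on simplex lines, it suffices to analyse the tops through one fixed simplex line $\ell$; transitivity on \emph{all} tops will then follow from transitivity on the tops through $\ell$. First I would record the elementary facts used throughout. Every one of the $q+1=6$ points of a simplex line is itself a simplex point: for a nonzero combination $\alpha x+\beta y$ of two rows of a generator matrix of $\ell$, exactly one column is proportional to $(\beta,-\alpha)$, so $\alpha x+\beta y$ has exactly one zero coordinate. Two distinct simplex lines are coplanar if and only if they meet, and a plane $\pi$ yields a top precisely when the simplex lines of $\pi$ are not concurrent; in that case the clique of all simplex lines in $\pi$ is automatically maximal, because any simplex line meeting three non-concurrent coplanar lines has at least two points in $\pi$ and hence lies in $\pi$. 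Finally, the stabiliser of $\ell$ in the monomial group induces on the $6$ points of $\ell$ a group containing $\mathrm{PGL}_2(\mathbb{F}_5)\cong S_5$, which is sharply $3$-transitive on these points and therefore transitive on their $3$-subsets.

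Next I would describe a top through $\ell$ as a complete quadrilateral having $\ell$ as one side. Granting that a top is non-concurrent with exactly four lines, its four sides meet pairwise in six distinct simplex vertices, and $\ell$ meets the three remaining sides in three distinct points, producing a $3$-subset of the point set of $\ell$. The heart of the argument is the map
$$\{\text{tops through }\ell\}\ \longrightarrow\ \{3\text{-subsets of the }6\text{ points of }\ell\},\qquad T\longmapsto T\cap\ell ,$$
which I claim is a bijection. Here the inversion characterisation is the decisive tool: for two collinear simplex points $P,Q$ the set $I(\langle I(P),I(Q)\rangle)\setminus\{P,Q\}$ of common simplex-neighbours off $\langle P,Q\rangle$ has exactly four elements, and this finiteness is what pins down the admissible remaining vertices of the quadrilateral and forces its fourth side. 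Injectivity and surjectivity of the map amount to the single assertion that each unordered triple of points of $\ell$ extends to exactly one top.

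A crucial simplification is that, by sharp $3$-transitivity of $\mathrm{PGL}_2(\mathbb{F}_5)$ on the points of $\ell$, all triples are equivalent under the stabiliser of $\ell$; hence it suffices to verify this existence-and-uniqueness statement for one convenient representative triple. This is the one place where I expect to compute. Fixing coordinates as in the generator matrix $\genfrac[]{0pt}{2}{x}{y}$ with $x=(1,0,1,1,1,1)$, $y=(0,1,1,2,3,4)$, I would, for the chosen triple, locate via $I$ the simplex points collinear with the relevant pairs, check that the admissible choices assemble into a unique plane $\pi$, and verify that $\pi$ contains no simplex line beyond the four sides of the quadrilateral. This simultaneously shows that the top has exactly $4$ lines and that no three of them are concurrent, so that the normal form assumed in the previous paragraph is justified. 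Once this single case is settled, $3$-transitivity propagates it to all triples.

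Putting this together, the bijection yields exactly $\binom{6}{3}=20$ tops through $\ell$, and the established normal form shows every top has precisely $4$ lines; since $\ell$ was an arbitrary simplex line and the monomial group is transitive on these, every line lies in $20$ tops. For the transfer statement, given two tops $T,T'$ I would first send a line of $T$ to a line of $T'$ by a monomial automorphism, reducing to two tops through a common line $\ell$; their triples on $\ell$ are matched by an element of the stabiliser of $\ell$, and the uniqueness half of the bijection then forces the two tops to coincide, producing the required monomial automorphism. The main obstacle is exactly the existence-and-uniqueness of the completion of a triple to a top: proving that the four common neighbours supplied by the inversion characterisation combine into one and only one simplex complete quadrilateral, with no fifth simplex line entering its plane. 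This is precisely the step that uses $q=5$ essentially, since for $q=7,11$ the inversion characterisation of collinearity already fails.
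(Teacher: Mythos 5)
Your outline reproduces the architecture of the paper's proof: the endgame is identical (a bijection between tops through a fixed simplex line $\ell$ and $3$-subsets of its $6$ points, giving $\binom{6}{3}=20$ tops per line, with transitivity supplied by the MacWilliams-type lemma that any triple of points on a simplex line can be carried to any other by a monomial automorphism), and the decisive input is the same inversion characterization, namely that $I(\langle I(P),I(Q)\rangle)\setminus\{P,Q\}$ is exactly the set of common simplex neighbours of $P,Q$ off $\langle P,Q\rangle$. Where you diverge is in how the central verification is discharged. The paper proves the $4$-line structure and the existence--uniqueness over a triple structurally: it observes that the six points of $I(\langle I(P),I(Q)\rangle)$ split into two planes $\langle P,Q,T\rangle$ and $\langle P',Q',T'\rangle$ inside the $3$-space $H(P)\cap H(Q)$, whose common line is the fourth side of the quadrilateral, and it excludes a fifth line by the dimension count that $I$ sends four distinct points of a simplex line to points spanning a $3$-space, so $H(P)\cap H(Q)\cap H(A)\cap H(X)$ is only a line. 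You instead propose to settle the whole crux by a finite check for one representative triple and propagate it by sharp $3$-transitivity; that is legitimate and arguably shorter to write down, but it buys less insight (the paper's $P',Q',T'$ construction actually exhibits the fourth line, and its hyperplane argument is what generalizes to the counting in the $20$-tops statement).

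One point needs care before your single-triple computation can be quoted for an \emph{arbitrary} top $T$ through $\ell$: the map $T\mapsto T\cap\ell$ is only well defined once you know $T$ is a complete quadrilateral, which is what you are trying to prove. You must first argue directly that any top through $\ell$ contains two further lines meeting $\ell$ in \emph{distinct} points $P,Q$ (this follows because a concurrent family of simplex lines sits inside a star and cannot be a maximal clique, by the paper's Remark on non-maximal top-intersections and the fact that a star has $24>6$ lines); their intersection point is then one of the four common neighbours of $P,Q$, which places the plane of $T$ among the finitely many planes your computation examines. With that bridge inserted, the orbit argument closes, and the uniqueness half of the bijection yields the transfer statement exactly as in the paper.
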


\section{Proof of Theorems \ref{theorem-top-gen} and \ref{theorem-top-q5}}

\subsection{The inversion transformation of ${\mathcal P}(V)$}
Let $I$ be the inversion transformation of ${\mathbb F}$ which sends every 
$a\in {\mathbb F}\setminus\{0\}$ to $a^{-1}$ and leaves $0$ fixed. 
For $q=2,3$ this transformation is identity. 
If $q=4$, then it is the automorphism of ${\mathbb F}$ transferring every $a$ to $a^2$
(we have $(a+b)^2=a^2+b^2$ in the case of characteristic $2$).

The transformation of $V$ sending every $(x_1,\dots,x_n)$ to $(I(x_1),\dots,I(x_n))$ will be denoted by the same symbol $I$.
For $q=2,3$ this is the identity. 
If $q=4$, then we obtain the monomial semilinear automorphism of $V$ associated to the field automorphism $a\to a^2$
and leaving fixed all vectors of the standard basis.
We have $I(ax)=I(a)I(x)$ for every $a\in {\mathbb F}$ and $x\in V$ which means that
$I$ induces a bijective transformation of ${\mathcal P}(V)$ preserving the set of simplex points.

If $x=(x_1,\dots, x_n)$ and $y=(y_1,\dots, y_n)$, then we define
$$x\cdot y=x_1y_1+\dots +x_ny_n.$$
For a point $P=\langle x\rangle$ we denote by $H(P)$ the hyperplane formed by all points $\langle y \rangle$
satisfying $I(x)\cdot y=0$. 
The hyperplanes $H(P)$ and $H(Q)$ are distinct for distinct points $P,Q$.

\begin{lemma}\label{lemmaH}
For every simplex point $P$ the following assertions are fulfilled:
\begin{enumerate}
\item[{\rm(1)}] $P$ belongs to $H(P)$;
\item[{\rm(2)}] every simplex point collinear to $P$ belongs to $H(P)$.
\end{enumerate}
\end{lemma}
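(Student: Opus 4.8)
The plan is to reduce both assertions to an explicit evaluation of the scalar products $I(x)\cdot x$ and (for (2)) $I(x)\cdot y$, where $P=\langle x\rangle$ and $Q=\langle y\rangle$ are simplex points. The starting observation is that each coordinate of $I(x)$ equals $x_i^{-1}$ when $x_i\neq 0$ and $0$ otherwise; hence the term $I(x_i)y_i$ vanishes whenever $x_i=0$ and equals the ratio $y_i/x_i$ whenever $x_i\neq 0$. Thus the whole computation comes down to understanding how these ratios are distributed over the coordinates with $x_i\neq 0$.

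For (1) I would use that a simplex point admits a representative with exactly one zero coordinate: since $n=q+1$ and a non-zero simplex codeword has Hamming weight $q$, precisely one coordinate of $x$ vanishes (and this zero pattern is the same for every representative, as scaling preserves support). Then $I(x)\cdot x=\sum_i I(x_i)x_i$ is a sum of exactly $q$ ones, one for each non-zero coordinate, so it equals $q\cdot 1=0$ in ${\mathbb F}_q$. This gives $P\in H(P)$.

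For (2) the key structural fact is that collinearity of $P$ and $Q$ forces the $q+1$ columns of $\genfrac[]{0pt}{2}{x}{y}$ to be non-zero and pairwise non-proportional. Since a projective line over ${\mathbb F}$ has exactly $q+1$ points and there are $q+1$ columns, these columns represent pairwise distinct points and therefore exhaust the whole projective line. In particular exactly one column has $x_i=0$ (consistent with $x$ having a single zero coordinate), and it contributes nothing to $I(x)\cdot y$ because $I(0)=0$. For the remaining $q$ columns we have $x_i\neq 0$, and as the corresponding points run over the $q$ affine points of the line the ratio $y_i/x_i$ takes each value in ${\mathbb F}$ exactly once. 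Consequently
$$I(x)\cdot y=\sum_{i:\,x_i\neq 0}\frac{y_i}{x_i}=\sum_{t\in{\mathbb F}}t,$$
and I would conclude by the standard fact that the sum of all elements of ${\mathbb F}_q$ is $0$ for $q>2$. This yields $Q\in H(P)$.

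The computation is short, so there is no serious obstacle; the one point requiring genuine care is the identification of the column set with the full projective line, which is exactly what converts an a priori opaque scalar product into the transparent sum $\sum_{t\in{\mathbb F}}t$. It is also worth recording that the column with $x_i=0$ drops out automatically, so no separate treatment of the point at infinity is needed, and that the hypothesis $q>2$ (a fortiori $q\ge 5$) is what makes the final sum vanish.
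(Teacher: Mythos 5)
Your proposal is correct and follows essentially the same route as the paper: both parts reduce to evaluating $I(x)\cdot x$ and $I(x)\cdot y$ directly and observing that the resulting sum of field elements vanishes. The only cosmetic difference is that you absorb the column with $y_i=0$ into a sum over all of ${\mathbb F}$, while the paper discards the two zero terms and sums over ${\mathbb F}\setminus\{0\}$; these are the same computation.
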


\begin{proof}
(1). If $P=\langle x\rangle$ and $x=(x_1,\dots, x_n)$, then only one of $I(x_i)x_i$ is zero 
and  each of the remaining $I(x_i)x_i$ is $1$ which means that 
$$I(x)\cdot x=\underbrace{1+\dots+1}_{q}=0.$$
(2). For a simplex point $Q=\langle y\rangle$, $y=(y_1,\dots, y_n)$
consider the $(2\times n)$-matrix whose rows are $x$ and $y$.
If $Q$ is collinear to $P$, then all columns of this matrix are non-zero and mutually non-proportional. 
The latter implies that precisely two of $I(x_i)y_i$ are zero 
and the remaining $I(x_i)y_i$ are mutually distinct; in particular,
for every non-zero $a\in {\mathbb F}$ there is a unique index $i$ such that $I(x_i)y_i=a$.
Since the sum of all non-zero elements of ${\mathbb F}$ is zero, we obtain that $I(x)\cdot y=0$. 
\end{proof}

\begin{rem}{\rm
In the case when $q=4$, a simplex point $Q$ is collinear to a simplex point $P$  if and only if $Q$ belongs to $H(P)$
(this follows from the fact that three non-zero elements of ${\mathbb F}_4$ are mutually distinct if and only if 
their sum is zero). 
Show that for every $q\ge 5$ the hyperplane $H(P)$ contains simplex points non-collinear to $P$.
If $q\ge 5$, then there are non-zero $a_1,\dots,a_{q-1}\in {\mathbb F}$ such that
$$a_1+\dots+a_{q-1}=0$$
and $a_i=a_j$ for some distinct $i,j$
(see \cite[Remark 1]{KP2020} for the details).
If 
$$P=\langle (0,1,\dots,1)\rangle\;\mbox{ and }\; Q=\langle (1,0, a_1,\dots,a_{q-1})\rangle,$$
then $Q$ is a simplex point of $H(P)$ non-collinear to $P$.
For every simplex point $P'$
there is a monomial linear automorphism of $V$ sending $P$ to $P'$.
It transfers $H(P)$ to $H(P')$ and $Q$ to a simplex point of $H(P')$ non-collinear to $P'$.
}\end{rem}

\begin{prop}\label{propF}
The following assertions are fulfilled:
\begin{enumerate}
\item[{\rm (1)}] Simplex points $P,Q$ are collinear if and only if $I(P),I(Q)$ are collinear.
\item[{\rm (2)}] If $q\ge 5$, then for any mutually distinct points $P_1,P_2,P_3$ on a simplex line
the points $I(P_1),I(P_2),I(P_3)$ span a projective plane. 
 \end{enumerate}
\end{prop}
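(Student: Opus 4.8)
The plan is to prove the two assertions by different means: (1) is immediate from the action of $I$ on the columns of a generator matrix, while (2) is a general-position statement that reduces, after a normalisation, to the non-vanishing of a Cauchy-type determinant.

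For (1), write $P=\langle x\rangle$, $Q=\langle y\rangle$ and recall that $P,Q$ are collinear exactly when all columns $\binom{x_i}{y_i}$ are non-zero and mutually non-proportional. First I would note that $I$ acts coordinatewise and fixes $0$, so $I(x),I(y)$ have their zero entries in precisely the same positions as $x,y$; hence the columns of $\binom{I(x)}{I(y)}$ are non-zero in exactly the same positions as those of $\binom{x}{y}$. Next I would check that $I$ preserves proportionality of two-entry columns: if both entries are non-zero, then $\binom{x_i}{y_i}$ and $\binom{x_j}{y_j}$ are proportional iff $x_iy_j=x_jy_i$, and multiplying by $(x_ix_jy_iy_j)^{-1}$ shows this is equivalent to $I(x_i)I(y_j)=I(x_j)I(y_i)$; a column with one zero entry can be proportional to no other column, before or after applying $I$. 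Thus the collinearity criterion holds for $P,Q$ iff it holds for $I(P),I(Q)$, and since $I$ is an involution of $\mathcal P(V)$ the equivalence follows.

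For (2), the key preliminary point is that, although $I$ is not equivariant under monomial maps, it obeys $I(Dc)=D^{-1}I(c)$ for a diagonal map $D$ and $I(\Pi c)=\Pi I(c)$ for a coordinate permutation $\Pi$. Consequently, for a monomial automorphism $g=D\Pi$ one has $I(gc)=h\,I(c)$ with $h=D^{-1}\Pi$ invertible, so replacing the simplex line and its three points by their $g$-images transforms the three $I$-images by the single invertible map $h$ and therefore does not affect whether they span a plane. Since all simplex lines are monomially equivalent, I may assume the line is standard, spanned by rows $x,y$ whose $q+1$ columns are the representatives $\binom{1}{s}$ ($s\in\mathbb F$) and $\binom{0}{1}$ of the points of $\mathbb P^1(\mathbb F)$. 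A point of the line is then $\langle\alpha x+\beta y\rangle$ with entry $\alpha+\beta s$ at position $s\in\mathbb F$ and entry $\beta$ at the position $\infty$; it is a simplex point whose unique zero, its \emph{root}, lies at $-\alpha/\beta\in\mathbb F$ (or at $\infty$ when $\beta=0$), and distinct points have distinct roots $r_1,r_2,r_3\in\mathbb F\cup\{\infty\}$.

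It then remains to prove that the vectors $I(c_j)$, whose entry at a finite non-root position $s$ is $\beta_j^{-1}(s-r_j)^{-1}$, are linearly independent. Testing a relation $\sum_j\lambda_jI(c_j)=0$ and putting $c_j=\lambda_j\beta_j^{-1}$, the equations at the finite non-root positions say that the polynomial $N(s)=\sum_j c_j\prod_{j'\neq j}(s-r_{j'})$ (of degree at most $2$) vanishes there, while the $\infty$-coordinate equation gives $\sum_jc_j=0$, i.e.\ $\deg N\le 1$; as there are at least $q-3\ge 2$ such positions when $q\ge 5$, this forces $N\equiv 0$, and evaluating at each $r_j$ yields $c_j=0$. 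The one case requiring separate treatment is when a root, say $r_3$, is $\infty$, for then $I(c_3)$ is constant on $\mathbb F$ and the Cauchy structure degenerates; here I would instead combine the equations at the two root positions $r_1,r_2$ with one further position, where the decisive coefficient is a quadratic in that position and hence non-zero for at least one of the $q-2\ge 3$ available choices. This small-field bookkeeping---guaranteeing that enough non-root positions survive after excluding the (at most three) roots, and in particular that the $\infty$-coordinate relation is available when $q=5$---is exactly where the hypothesis $q\ge 5$ is used and is the main obstacle, since three finite positions alone would only give the result for $q\ge 6$.
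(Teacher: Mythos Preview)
Your proof is correct, but for part~(2) you take a genuinely different route from the paper.

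Both you and the paper begin with the same reduction: $I$ intertwines with monomial maps up to another monomial map (your $I(D\Pi c)=D^{-1}\Pi\,I(c)$ is exactly the paper's Lemma~\ref{lemmaF}), so it suffices to work on a fixed simplex line. From there the arguments diverge. The paper uses the full transitivity of the monomial group on ordered triples (Lemma~\ref{lemma3p}) to reduce to the \emph{single} triple $\langle x\rangle,\langle y\rangle,\langle x+y\rangle$ with $x=(0,1,\dots,1)$ and $y=(1,0,1,\alpha,\dots,\alpha^{q-2})$, and then checks linear independence of $I(x),I(y),I(x+y)$ by computing an explicit $3\times 3$ minor, splitting into two cases according to whether $\operatorname{char}\mathbb F\in\{2,3\}$ (the minor using the first three coordinates works when $2\ne 2^{-1}$, and otherwise a different minor is needed, leading to the equation $\alpha^2+\alpha+1=0$). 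Your argument instead treats all triples on the standard line at once via the root parametrisation $r_1,r_2,r_3\in\mathbb F\cup\{\infty\}$: for three finite roots you observe that a dependence relation forces a polynomial $N(s)$ of degree $\le 2$ to vanish at $q-3\ge 2$ points while the $\infty$-coordinate kills its top coefficient, whence $N\equiv 0$; for one infinite root you pick a good third column by a counting argument (at most two bad choices among $q-2\ge 3$). What your approach buys is uniformity in the characteristic and a structural explanation (Cauchy-type independence) rather than ad~hoc minors; what the paper's approach buys is complete explicitness with no existential choice of column, and it makes transparent why $q=4$ is excluded (namely $2=2^{-1}$ there, and $\alpha^2+\alpha+1=0$ for the primitive element).

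For part~(1) your argument is essentially the paper's: you check directly that $I$ preserves column proportionality, while the paper phrases the same computation in terms of the distinctness of the products $I(x_i)y_i$.
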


To prove the statement (2) from Proposition \ref{propF} we need the following lemmas.

\begin{lemma}\label{lemma3p}
For any mutually distinct points $P_1,P_2,P_3$ and $P'_1,P'_2,P'_3$ on simplex lines $L$ and $L'$, respectively,
there is a monomial linear automorphism $l$ of $V$ such that $l(P_i)=P'_i$ for every $i\in \{1,2,3\}$.
\end{lemma}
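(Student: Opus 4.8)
The plan is to reduce the statement to the sharp $3$-transitivity of $\mathrm{PGL}_2({\mathbb F})$ on the projective line $\mathcal{P}({\mathbb F}^2)$. First I would fix generator matrices $G=(g_1,\dots,g_n)$ and $G'=(g'_1,\dots,g'_n)$ of the simplex codes $L$ and $L'$, writing $g_j=(u_j,v_j)$. Property (*) together with $n=q+1$ means that the columns $[g_1],\dots,[g_n]$ run, without repetition, through all points of $\mathcal{P}({\mathbb F}^2)$; denote by $\beta,\beta'\colon\{1,\dots,n\}\to\mathcal{P}({\mathbb F}^2)$ the resulting bijections $j\mapsto[g_j]$ and $i\mapsto[g'_i]$.

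Second, I would record the elementary fact that a representative of a point of $L$ has exactly one vanishing coordinate: the codeword $c_{(a,b)}$ (the $(a,b)$-combination of the rows of $G$) vanishes at position $j$ precisely when $au_j+bv_j=0$, and each parameter $[a:b]$ is orthogonal to exactly one column. This yields a bijection $\pi_L$ from the $q+1$ points of $L$ onto $\mathcal{P}({\mathbb F}^2)$, sending the point vanishing at position $j$ to $\beta(j)$, and likewise $\pi_{L'}$ for $L'$. Thus the three given points translate into three distinct points $R_i:=\pi_L(P_i)$ and $R'_i:=\pi_{L'}(P'_i)$ of $\mathcal{P}({\mathbb F}^2)$.

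Third, I would compute the action of a monomial automorphism in these coordinates. Writing $l\colon(x_1,\dots,x_n)\mapsto(a_1x_{\sigma(1)},\dots,a_nx_{\sigma(n)})$, the image $l(L)$ has generator matrix with $i$-th column $a_ig_{\sigma(i)}$; hence $l(L)=L'$ holds as soon as there is $M\in\mathrm{GL}_2({\mathbb F})$ with $a_ig_{\sigma(i)}=Mg'_i$ for all $i$. Conversely, given any $M\in\mathrm{GL}_2({\mathbb F})$, the permutation $\sigma:=\beta^{-1}\circ M\circ\beta'$ is well defined, the scalars $a_i$ are then uniquely determined, and the resulting monomial $l$ satisfies $l(L)=L'$. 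Tracking the vanishing coordinate (a zero at position $\sigma(i)$ is moved by $l$ to position $i$) shows that the induced bijection $\pi_{L'}\circ l\circ\pi_L^{-1}$ of $\mathcal{P}({\mathbb F}^2)$ equals $M^{-1}$.

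Finally, since $q+1\ge 3$ and $\mathrm{PGL}_2({\mathbb F})$ acts sharply $3$-transitively on $\mathcal{P}({\mathbb F}^2)$, there is $N\in\mathrm{PGL}_2({\mathbb F})$ with $N(R_i)=R'_i$ for $i=1,2,3$; taking $M$ to be a lift of $N^{-1}$ produces, by the construction above, a monomial automorphism $l$ with $l(L)=L'$ whose induced map sends each $R_i$ to $R'_i$, that is $l(P_i)=P'_i$. I expect the main obstacle to be purely the bookkeeping: verifying cleanly that the dictionary \emph{point of $L$} $\leftrightarrow$ \emph{vanishing coordinate} $\leftrightarrow$ \emph{column of $G$ as a point of $\mathcal{P}({\mathbb F}^2)$} intertwines the monomial action with the $\mathrm{PGL}_2$-action, and that the scalars $a_i$ are always available, after which the conclusion is immediate from $3$-transitivity.
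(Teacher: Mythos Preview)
Your argument is correct, but it takes a different route from the paper's. Both proofs ultimately rest on the $3$-transitivity of $\mathrm{PGL}_2({\mathbb F})$ on the projective line: the paper phrases this as the existence of a linear isomorphism $l\colon C\to C'$ between the $2$-dimensional codes sending each $P_i$ to $P'_i$. The difference is in how this map on the codes is promoted to a monomial automorphism of $V$. The paper simply invokes MacWilliams' theorem (any Hamming-weight-preserving linear isomorphism between linear codes extends to a monomial automorphism of $V$), relying implicitly on the fact that all non-zero codewords of a simplex code have the same weight, so that any linear isomorphism between two simplex codes is automatically weight-preserving. You instead construct the monomial automorphism explicitly from the column bijection induced by $M\in\mathrm{GL}_2({\mathbb F})$ and verify by tracking vanishing coordinates that the induced action on points matches $M^{-1}$. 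Your approach is self-contained and avoids the MacWilliams black box---essentially reproving the relevant special case---at the cost of the bookkeeping you anticipated; the paper's proof is a two-line appeal to a standard result.
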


\begin{proof}
Let $C$ and $C'$ be the simplex codes corresponding to $L$ and $L'$, respectively. 
There is a linear isomorphism $l:C\to C'$ satisfying $l(P_i)=P'_i$ for every $i\in \{1,2,3\}$.
By MacWillams theorem, $l$ can be extended to a monomial linear automorphism of $V$.
\end{proof}

\begin{lemma}\label{lemmaF}
For every monomial linear automorphism $l$ of $V$ there is a monomial linear automorphism $l'$ of $V$ such that
$$Il=l'I.$$
\end{lemma}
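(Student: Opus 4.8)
The plan is to prove the identity by a direct coordinate computation, taking advantage of the fact that the inversion $I$ interacts nicely with the multiplicative structure of ${\mathbb F}$. First I would write $l$ in its monomial form: there are non-zero scalars $a_1,\dots,a_n\in{\mathbb F}$ and a permutation $\sigma$ on $\{1,\dots,n\}$ with $l(x_1,\dots,x_n)=(a_1x_{\sigma(1)},\dots,a_nx_{\sigma(n)})$. Since inversion on ${\mathbb F}$ is an involution, the induced transformation $I$ of $V$ satisfies $I^{-1}=I$, so the natural candidate is $l'=IlI$, and it suffices to check that this candidate is again a monomial linear automorphism.

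The two properties I would exploit are that $I$ is an involution and that it is multiplicative on the field: $I(ab)=I(a)I(b)$ for all $a,b\in{\mathbb F}$. The latter is immediate, since for non-zero $a,b$ one has $(ab)^{-1}=a^{-1}b^{-1}$, while both sides vanish when $a$ or $b$ is $0$ (in the case $q=4$ this is just the statement that $a\mapsto a^2$ is a field automorphism). Applying $I$ coordinatewise and using these two facts, I would compute, for $x=(x_1,\dots,x_n)$,
$$I\bigl(l(I(x))\bigr)_i=I\bigl(a_i\,I(x_{\sigma(i)})\bigr)=I(a_i)\,I\bigl(I(x_{\sigma(i)})\bigr)=I(a_i)\,x_{\sigma(i)}.$$
Thus $IlI$ sends $(x_1,\dots,x_n)$ to $(I(a_1)x_{\sigma(1)},\dots,I(a_n)x_{\sigma(n)})$, which is exactly the monomial linear automorphism with non-zero scalars $I(a_i)=a_i^{-1}$ and permutation $\sigma$. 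Setting $l'=IlI$ therefore produces a monomial linear automorphism, and since $I^2$ is the identity the identity $IlI=l'$ rearranges to $l'I=Il$, as required.

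I do not expect a serious obstacle here, but the one point that must be handled with care is that for $q\ge 5$ the transformation $I$ of $V$ is \emph{not} additive (inversion on ${\mathbb F}$ is not additive), so $I$ is neither linear nor semilinear. Consequently one cannot conclude that $l'=IlI$ is monomial by the soft argument that it is a composition of (semi)linear maps; the conclusion genuinely rests on the explicit coordinatewise computation above, in which the permutation part of $l$ commutes with $I$ (because $I$ acts separately on each coordinate) and the scalar part is transformed into $I(a_i)$ (because $I$ is multiplicative on ${\mathbb F}$).
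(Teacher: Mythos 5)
Your proposal is correct and is essentially the paper's own proof: the authors simply write down $l'\colon (x_1,\dots,x_n)\mapsto (a_1^{-1}x_{\sigma(1)},\dots,a_n^{-1}x_{\sigma(n)})$ and assert it works, which is exactly the map you obtain by computing $IlI$ coordinatewise using that $I$ is an involution and multiplicative. Your version just makes the verification (and the caveat that $I$ is not semilinear for $q\ge 5$) explicit.
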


\begin{proof}
If $l$ transfers every $(x_1,\dots,x_n)$ to
$$(a_1x_{\sigma(1)},\dots,a_nx_{\sigma(n)}),$$
then the monomial linear automorphism 
$$l':(x_1,\dots,x_n)\to (a^{-1}_1x_{\sigma(1)},\dots,a^{-1}_nx_{\sigma(n)})$$
is as required.
\end{proof}

\begin{proof}[Proof of Proposition \ref{propF}]
(1). 
Simplex points $P=\langle x\rangle, Q=\langle y\rangle$ are collinear if and only if precisely two of $I(x_i)y_i$ are zero 
and the remaining $I(x_i)y_i$ are mutually distinct.
Since $I$ is bijective and $$I(I(x_i)y_i)=I(I(x_i))I(y_i),$$
the latter holds if and only if precisely two of $I(I(x_i))I(y_i)$ are zero 
and the remaining  $I(I(x_i))I(y_i)$ are mutually distinct which is equivalent to the fact that the simplex points $I(P),I(Q)$ are collinear.

(2). By Lemmas \ref{lemma3p} and \ref{lemmaF}, it is sufficient to check the statement for the simplex points $\langle x \rangle, \langle y \rangle, \langle x+y \rangle$,
where
$$x=(0,1,\dots,1),\; y=(1,0,1,\alpha,\dots,\alpha^{q-2}),\;x+y=(1,1,2,\alpha+1,\dots,\alpha^{q-2}+1)$$
and $\alpha$ is a primitive element of the field. 

Suppose that the field characteristic is not $2$ or $3$ and consider
the vectors
$$(0,1,1),\; (1,0,1),\; (1,1,2^{-1})$$
formed by the first three coordinates of $I(x),I(y),I(x+y)$, respectively.
Since $2\ne 2^{-1}$ (the characteristic of $\mathbb{F}$ is not $2$ or $3$),
these vectors are linearly independent and, consequently, $I(x),I(y),I(x+y)$ are linearly independent. 

Now, we assume that the characteristic is $2$ or $3$ and $q\neq 2,3,4$.
Consider the vectors
$$(0,1,1),\; (1,0,\alpha^{-1}),\; (1,1,(\alpha +1)^{-1})$$
formed by the first, second and fourth coordinates of $I(x),I(y),I(x+y)$, respectively. 
If these vectors are linearly dependent, then $1+\alpha^{-1}=(1+\alpha)^{-1}$ and
$$(1+\alpha)(1+\alpha^{-1})=1,$$
$$\alpha+\alpha^{-1}+2=1,$$
$$\alpha+1+\alpha^{-1}=0,$$
$$\alpha^2+\alpha+1=0.$$
If the characteristic is $2$, then the latter equality  holds only  when $\alpha$ is the primitive element of $\mathbb{F}_4$.
If the characteristic  is $3$, then the polynomial 
$$x^2+x+1=x^2-2x+1=(x-1)^2$$
is reducible and there is no primitive element $\alpha\in \mathbb{F}_{3^m}$, $m\geq2$ satisfing
$\alpha^2+\alpha+1=0$.
So, $I(x),I(y),I(x+y)$ are linearly independent for each of these cases.
\end{proof}

\begin{rem}{\rm
For $q=2,3,4$ the statement (2) from Proposition \ref{propF} fails (in these cases, $I$ is identity or a monomial semilinear automorphism).
}\end{rem}

We will need the following statement concerning the case $q=5$.

\begin{lemma}\label{lemmaF5}
 If $q=5$, then  for any mutually distinct points $P_1,P_2,P_3,P_4$ on a simplex line
the points $I(P_1),I(P_2),I(P_3),I(P_4)$ span a $3$-dimensional projective space.
\end{lemma}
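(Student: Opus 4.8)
The plan is to reduce the statement, by means of the monomial-automorphism machinery already developed, to a single explicit configuration of four points on a fixed simplex line, and then to confirm the rank condition by a short computation over $\mathbb{F}_5$. A simplex line carries $q+1=6$ points, and saying that $I(P_1),\dots,I(P_4)$ span a $3$-dimensional projective space is the same as saying that the four codewords representing them are linearly independent in $V=\mathbb{F}_5^6$. By Proposition \ref{propF}(2) any three of these images are already linearly independent, so the only thing that could go wrong is that the fourth image lies in the plane spanned by the other three; the task is to rule this out.

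For the reduction I would identify the simplex line with the projective line $\mathrm{PG}(1,5)$ over its six points. By Lemma \ref{lemma3p}, the monomial automorphisms of $V$ stabilizing such a line induce a $3$-transitive group of collineations on its points; since $\mathrm{PGL}(2,5)$ is sharply $3$-transitive, the induced group is all of $\mathrm{PGL}(2,5)$. A cross-ratio computation then shows that every set of four distinct points of $\mathrm{PG}(1,5)$ has cross-ratio in $\{2,3,4\}$ and that the anharmonic orbit of any one of these values is the whole set $\{2,3,4\}$; hence all of the fifteen $4$-subsets lie in one orbit, i.e. $\mathrm{PGL}(2,5)$ is transitive on $4$-subsets. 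Combining this with Lemma \ref{lemma3p}, there is a monomial automorphism $l$ of $V$ carrying $\{P_1,P_2,P_3,P_4\}$ onto a single canonical $4$-set on the standard simplex line, say $\langle x\rangle,\langle y\rangle,\langle x+y\rangle,\langle x+2y\rangle$ with $x,y$ as in the proof of Proposition \ref{propF}(2). By Lemma \ref{lemmaF} we may write $Il=l'I$ with $l'$ monomial, so $I(l(P_i))=l'(I(P_i))$; as $l'$ is a linear automorphism it preserves dimension of span, and the claim for $P_1,\dots,P_4$ becomes equivalent to the same claim for the canonical $4$-set.

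It then remains to check the canonical case. Taking $\alpha=2$ as a primitive element of $\mathbb{F}_5$, I would write out the six coordinates of each of $I(x),I(y),I(x+y),I(x+2y)$, assemble them as the rows of a $4\times 6$ matrix, and row-reduce over $\mathbb{F}_5$ to see that its rank is $4$; this forces linear independence and hence that the four image points span a $3$-dimensional projective space. The only genuinely nontrivial step is the reduction to one configuration: once the transitivity of $\mathrm{PGL}(2,5)$ on $4$-subsets is in hand, what is left is a finite verification. If one prefers to avoid the cross-ratio argument, an equivalent route is to fix $P_1,P_2,P_3$ as the canonical triple by Lemma \ref{lemma3p} and then verify the three possible positions $\langle x+cy\rangle$, $c\in\{2,3,4\}$, of the fourth point directly.
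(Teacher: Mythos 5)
Your proposal is correct, and its fallback route (fix $P_1,P_2,P_3$ as the canonical triple via Lemma \ref{lemma3p} and Lemma \ref{lemmaF}, then check the three possible fourth points) is exactly the paper's proof: the paper takes $x=(0,1,1,1,1,1)$, $y=(1,0,1,2,4,3)$, $P_4=\langle ax+y\rangle$ with $a\in\{2,3,4\}$, and verifies three $4\times 4$ determinants built from the first four coordinates of the inverted vectors. Your primary route differs in one genuine respect: you push the reduction further by noting that, by Lemma \ref{lemma3p}, the monomial stabilizer of a simplex line induces a $3$-transitive, hence (by sharp $3$-transitivity) the full group $\mathrm{PGL}(2,5)$ on its six points, and that this group is transitive on $4$-subsets, so a single rank computation suffices. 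That transitivity claim is true; your cross-ratio justification works (for $q=5$ the only cross-ratio values are $2,3,4$ and each anharmonic orbit is all of $\{2,3,4\}$), though it can be obtained for free from $2$-transitivity by passing to complements, since $4$-subsets of a $6$-point line are complements of $2$-subsets. What the extra reduction buys is one determinant instead of three; what it costs is the additional group-theoretic step, which the paper avoids by simply accepting the three-case check. Both arguments are sound, and the key conceptual ingredients (Lemmas \ref{lemma3p} and \ref{lemmaF} to normalize, then a finite verification over $\mathbb{F}_5$) are the same.
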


\begin{proof}
By Lemmas \ref{lemma3p} and \ref{lemmaF}, it is sufficient to consider the case when 
$$P_1=\langle x \rangle,\;P_2=\langle y \rangle,\;P_3=\langle x+y \rangle,$$
where
$$x=(0,1,1,1,1,1),\;y=(1,0,1,2,4,3),\;x+y=(1,1,2,3,0,4),$$
and $P_4=\langle ax+y\rangle$ with $a\in \{2,3,4\}$.
We have
$$I(x)=(0,1,1,1,1,1),\;I(y)=(1,0,1,3,4,2),\;I(x+y)=(1,1,3,2,0,4),$$
$$I(2x+y)=(1,3,2,4,1,0),\;I(3x+y)=(1,2,4,0,3,1),\; I(4x+y)=(1,4,0,1,2,3).$$
A direct calculation shows that for each $a\in \{2,3,4\}$ the determinant whose rows are formed by the first four coordinates of
$I(x),I(y),I(x+y),I(ax+y)$ is non-zero.
\end{proof}

\begin{rem}\label{rem-con}{\rm
Our conjecture is the following:  if $q\ge 5$, then $I$ transfers any $n-2=q-1$ mutually distinct points on a simplex line to 
points spanning an $(n-3)$-dimensional projective space.
For example, if $q=7$, then it is sufficient to show that the vectors 
$$I(x),\; I(y),\; I(x+y),\;I(ax+y),\;I(bx+y),\;I(cx+y),$$
where $x=(0,1,1,1,1,1,1,1)$, $y=(1,0,1,3,2,6,4,5)$, are linearly independent
for any  mutually distinct $a,b,c\in {\mathbb F}\setminus\{0,1\}$. 
There are precisely $\binom{5}{3}=10$ such triples $a,b,c$.
Calculations show that  the $10$ determinants  whose rows are formed by the first six coordinates of these vectors are non-zero.
This supports the conjecture. 
}\end{rem}

\subsection{Proof of Theorem \ref{theorem-top-gen}}
Suppose that $q\ge 5$. 
Let $P_1,P_2,P_3$ be mutually distinct points on a simplex line. 
By the statement (1) from Proposition \ref{propF}, 
the simplex points $I(P_1),I(P_2),I(P_3)$ are mutually collinear. 
The statement (2) from Proposition \ref{propF} says that these points span a projective plane 
and, consequently, the three simplex lines joining pairs of these points belong to a certain top of $\Gamma$.

\begin{rem}{\rm
In \cite{KP2023}, a top of $\Gamma$ is constructed without the inversion transformation $I$.
The arguments from \cite{KP2023} are more complicated.
}\end{rem}

\subsection{Proof of Theorem \ref{theorem-top-q5}}
Suppose that $q=5$. Then $n=6$.

Let $P$ and $Q$ be collinear simplex points. By Proposition \ref{propF},
the simplex points $I(P)$ and $I(Q)$ are collinear.
Since $I$ is an involution, 
$I(\langle I(P),I(Q)\rangle)$ contains $P$ and $Q$. 
It follows from Proposition \ref{propF}  that the remaining $4$ points from this set are not on the line $\langle P,Q\rangle$.
Also, Proposition \ref{propF} shows that the points from $I(\langle I(P),I(Q)\rangle)$ are mutually collinear.

\begin{lemma}\label{lemma-4p}
For any collinear simplex points $P,Q$ there are precisely $4$ simplex points which are collinear to both $P,Q$ and do not belong to the line $\langle P,Q\rangle$.
\end{lemma}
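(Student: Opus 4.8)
The preceding discussion already exhibits four distinct simplex points collinear to both $P,Q$ and off the line $\langle P,Q\rangle$, namely the points of $I(\langle I(P),I(Q)\rangle)\setminus\{P,Q\}$; so the real content of the lemma is the reverse inequality, that there are no others. The plan is to reduce to a single normalized pair and then settle the count by a finite combinatorial analysis. First I would observe that the number of simplex points collinear to both $P$ and $Q$ and off $\langle P,Q\rangle$ is invariant under monomial linear automorphisms: such maps are linear, preserve the set of simplex points and the collinearity relation (applied to both rows of a generator matrix they merely scale and permute the columns, so the non-zero/non-proportional property is preserved), and they send $\langle P,Q\rangle$ to the line joining the images. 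Hence, by Lemma \ref{lemma3p} applied to the triples obtained by adjoining a third point on each line, it suffices to prove the bound for the standard pair $P=\langle x\rangle$, $Q=\langle y\rangle$ with $x=(0,1,1,1,1,1)$ and $y=(1,0,1,2,4,3)$.

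For this pair I would first record a clean description of the simplex points collinear to $P$. Writing a candidate as $R=\langle z\rangle$ normalized so that $z_1=1$, the collinearity criterion for simplex points (non-zero, pairwise non-proportional columns of $\genfrac[]{0pt}{2}{x}{z}$) is equivalent to requiring $z_1\neq0$ together with $z_2,\dots,z_6$ pairwise distinct; since $R$ is a simplex point this forces $(z_2,\dots,z_6)$ to be a permutation of $\mathbb{F}_5$. Collinearity to $Q$ adds, after the same column analysis applied to $\genfrac[]{0pt}{2}{y}{z}$, the requirement that $z_2\neq0$ and that the ratios $z_i y_i^{-1}$ over the indices $i\neq2$ be pairwise distinct, that is, again a permutation of $\mathbb{F}_5$ (note $z_1y_1^{-1}=1$). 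Thus the simplex points collinear to both $P$ and $Q$ are in bijection with the permutations $(z_2,\dots,z_6)$ of $\mathbb{F}_5$ satisfying these two simultaneous permutation constraints, and I would resolve this by a direct enumeration organized according to which coordinate carries the zero entry; the outcome is that there are exactly eight such points.

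It then remains to separate the eight common neighbours into those lying on $\langle P,Q\rangle$ and those off it. Since all $q+1=6$ points of the simplex line $\langle P,Q\rangle$ are pairwise collinear, the four points $\langle x+\gamma y\rangle$ with $\gamma\in\mathbb{F}_5\setminus\{0\}$ are collinear to both $P$ and $Q$ and lie on the line; normalizing these four representatives and comparing them with the eight solutions shows that exactly four of the eight lie on $\langle P,Q\rangle$ and the remaining four lie off it. This yields the upper bound of four off-line points, matching the four produced by $I(\langle I(P),I(Q)\rangle)\setminus\{P,Q\}$, and completes the proof. I expect the main obstacle to be the enumeration in the second step: it is elementary but must be carried out carefully so that no admissible permutation is overlooked, and one must also confirm rigorously that monomial invariance legitimately reduces an arbitrary collinear pair to the normalized one.
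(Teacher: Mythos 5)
Your proposal is correct in outline and reaches the right intermediate count (eight common neighbours of $P,Q$, four on the line and four off it, matching the paper's $P_3,\dots,P_6$ and $Q_3,\dots,Q_6$), but the mechanics differ from the paper's in a way worth noting. You reduce to the normalized pair exactly as the paper does (via Lemma \ref{lemma3p} and monomial invariance) and then propose a raw enumeration of all permutations $(z_2,\dots,z_6)$ of ${\mathbb F}_5$ satisfying the two simultaneous ``permutation'' constraints coming from collinearity with $P$ and with $Q$; organized by the position of the zero entry this is on the order of $4\cdot 4!$ candidates, and you leave this check asserted rather than executed, which is where the entire content of the lemma sits. The paper instead deploys two labour-saving devices that make the verification short enough to write out: first, Lemma \ref{lemmaH} places every common neighbour in $H(P_1)\cap H(P_2)$, yielding two linear equations that (in the case $x_3=0$, $x_1=1$) express $x_2$ and $x_4$ in terms of $x_5,x_6$ and collapse the search to a handful of cases on $x_6$; second, the monomial automorphism $(t_1,\dots,t_6)\mapsto(t_1,2t_2,2t_6,2t_3,2t_4,2t_5)$ fixes $P_1,P_2$ and cyclically permutes the four possible positions of the zero coordinate, so only one position need be examined. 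Your approach buys conceptual simplicity (no inner product, no auxiliary symmetry), at the price of a much larger finite check that you would still have to carry out carefully; if you intend to complete the argument by hand rather than by computer, I would recommend importing at least the hyperplane constraints from Lemma \ref{lemmaH}, which you already have available at that point in the text.
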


\begin{proof}
Since any pair of collinear simplex points can be transferred to 
any other pair of such points by a monomial linear automorphism of $V$,
it is sufficient to determine all simplex points
collinear to both 
$$P_1=\langle0,1,1,1,1,1\rangle,\;P_2=\langle1,0,1,2,4,3\rangle.$$
The remaining four points on the simplex line $\langle P_1, P_2\rangle$ are 
$$P_3=\langle1,4,0,1,3,2\rangle,$$
$$P_4=\langle1,3,4,0,2,1\rangle,$$
$$P_5=\langle1,1,2,3,0,4\rangle,$$
$$P_6=\langle1,2,3,4,1,0\rangle.$$
The simplex points 
\begin{equation}\label{eqP1P2}
I(P_1)=P_1=\langle0,1,1,1,1,1\rangle\;\mbox{ and }\; I(P_2)=\langle1,0,1,3,4,2\rangle
\end{equation}
are collinear and the remaining four points on the line joining them are
$$Q'_3=\langle1,4,0,2,3,1\rangle,$$
$$Q'_4=\langle1,2,3,0,1,4\rangle,$$
$$Q'_5=\langle1,1,2,4,0,3\rangle,$$
$$Q'_6=\langle1,3,4,1,2,0\rangle.$$
The simplex points 
$$Q_3=I(Q'_3)=\langle1,4,0,3,2,1\rangle,$$
$$Q_4=I(Q'_4)=\langle1,3,2,0,1,4\rangle,$$
$$Q_5=I(Q'_5)=\langle1,1,3,4,0,2\rangle,$$
$$Q_6=I(Q'_6)=\langle1,2,4,1,3,0\rangle$$
are collinear to both $P_1,P_2$ and each of them is not on the line $\langle P_1,P_2\rangle$. 

Let   $T=\langle x_1,\dots,x_6\rangle$ be a simplex point collinear to both $P_1,P_2$.
Lemma \ref{lemmaH} shows that 
$T$ belongs to the intersection of the hyperplanes $H(P_1)$ and $H(P_2)$.
Then, by \eqref{eqP1P2}, we have
\begin{equation}\label{eq1}
x_2+x_3+x_4+x_5+x_6=0\;\mbox{ and }\;x_1+x_3+3x_4+4x_5+2x_6=0.
\end{equation}
If $T$ is collinear to a simplex point $\langle y_1,\dots,y_6\rangle$, then $x_i=0$ implies that $y_i\ne 0$.
Since $T$ is collinear to $P_1=\langle0,1,1,1,1,1\rangle,P_2=\langle1,0,1,2,4,3\rangle$, we obtain that 
$$x_i=0\;\mbox{ for a unique }\; i\ge 3.$$  
Also, we can assume that $x_1=1$.

Consider the case when $x_3=0$.
Then \eqref{eq1} shows that
\begin{equation}\label{eq2}
x_2=2x_5+3x_6+2\;\mbox{ and }\;x_4=2x_5+x_6+3.
\end{equation}
Recall that simplex points $\langle x\rangle, \langle y \rangle$ are collinear if and only if 
the columns of the $(2\times 6)$-matrix $\genfrac[]{0pt}{2}{x}{y}$ are mutually non-proportional.
Since 
$$T=\langle 1,x_2,0,x_4,x_5,x_6\rangle,\; P_1=\langle0,1,1,1,1,1\rangle$$ 
are collinear, $x_2,x_4,x_5,x_6$ are non-zero and mutually distinct.
The collinearity of 
$$T=\langle 1,x_2,0,x_4,x_5,x_6\rangle,\;P_2=\langle1,0,1,2,4,3\rangle$$ implies that $1,2^{-1}x_4,4^{-1}x_5,3^{-1}x_6$ are mutually distinct 
which means that 
$$x_5\neq4,\;x_6\neq3,\; x_5\ne 3x_6.$$ 
Consider all possibilities for $x_6$:
\begin{enumerate}
\item[$\bullet$] If $x_6=1$, then $x_5\neq 1,3,4$ and, consequently, $x_5=2$ which means that
$T=\langle 1,4,0,3,2,1\rangle=Q_3$.
\item[$\bullet$]
If $x_6=2$, then $x_5\neq 1,2,4$. Thus $x_5=3$ and
$T=\langle 1,4,0,1,3,2\rangle=P_3$.
\item[$\bullet$]
It was noted above that $x_6\ne 3$.
\item[$\bullet$]
If $x_6=4$,  then $x_5\neq 2,4$. 
Since $x_2\ne x_5$,  we have 
$$0\neq x_2-x_5=x_5+3x_6+2=x_5+4$$
by \eqref{eq2} and, consequently,  $x_5\neq 1$. 
Similarly, $x_4\ne x_5$ and \eqref{eq2} show that 
$$0\neq x_4-x_5=x_5+x_6+3=x_5+2$$
and $x_5\neq 3$. 
Therefore, the possibility $x_6=4$ is not realized.
\end{enumerate}
So, there are precisely two simplex points $\langle 1,\cdot,0,\cdot,\cdot,\cdot \rangle$ collinear to both $P_1,P_2$;
these points are $P_3$ and $Q_3$.

The monomial linear automorphism $l$ of $V$ transferring every $(t_1,\dots,t_6)$ to
$$(t_1,2t_2,2t_6,2t_3,2t_4,2t_5)$$
leaves fixed both $P_1,P_2$ and 
induces the permutations
$$(P_3,P_4,P_5,P_6)\;\mbox{ and }\;(Q_3,Q_4,Q_5,Q_6).$$
As above,  $T=\langle1,x_2,\dots,x_6\rangle$ is a simplex point collinear to both $P_1,P_2$. 
If $x_i=0$ for a certain $i\ge 4$, then 
there is a natural $k$ such that $$l^k(T)=\langle 1,\cdot,0,\cdot,\cdot,\cdot \rangle.$$
Then $l^k(T)$ is $P_3$ or $Q_3$ and, consequently, $T$ is $P_i$ or $Q_i$ for some $i\in \{4,5,6\}$.
\end{proof}

So, if $P,Q$ are collinear simplex points, then 
\begin{equation}\label{eq3}
I(\langle I(P),I(Q)\rangle)\setminus \{P,Q\}
\end{equation}
is the set of all simplex points which are collinear to both $P,Q$ and do not belong to the  line $\langle P,Q\rangle$.

\begin{lemma}\label{lemma-4p2}
For any collinear simplex points $P,Q$ every point from the set \eqref{eq3}
is not collinear to points of the line $\langle P,Q\rangle$ distinct from $P,Q$. 
\end{lemma}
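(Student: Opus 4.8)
The plan is to reduce to the canonical collinear pair and then combine a short finite check with the cyclic symmetry that is already available from the proof of Lemma~\ref{lemma-4p}.

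First I would invoke Lemma~\ref{lemma3p} to assume, without loss of generality, that $P=P_1=\langle 0,1,1,1,1,1\rangle$ and $Q=P_2=\langle 1,0,1,2,4,3\rangle$. A monomial linear automorphism carrying an arbitrary collinear pair to $(P_1,P_2)$ preserves the collinearity relation, sends the line $\langle P,Q\rangle$ to $\langle P_1,P_2\rangle$, and therefore maps its points other than $P,Q$ onto $\{P_3,P_4,P_5,P_6\}$. Moreover, since the set \eqref{eq3} was characterized intrinsically (in the paragraph following Lemma~\ref{lemma-4p}) as the set of all simplex points collinear to both $P,Q$ and off $\langle P,Q\rangle$, the same automorphism carries \eqref{eq3} onto $\{Q_3,Q_4,Q_5,Q_6\}$, with the explicit coordinates recorded in the proof of Lemma~\ref{lemma-4p}. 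Thus it suffices to prove that each $Q_j$, $j\in\{3,4,5,6\}$, is collinear to none of $P_3,P_4,P_5,P_6$.

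Next I would cut this down to a single point using the monomial automorphism $l\colon(t_1,\dots,t_6)\mapsto(t_1,2t_2,2t_6,2t_3,2t_4,2t_5)$ from the proof of Lemma~\ref{lemma-4p}, which fixes $P_1,P_2$, acts as the $4$-cycle $(P_3,P_4,P_5,P_6)$ on the remaining points of the line, and as the $4$-cycle $(Q_3,Q_4,Q_5,Q_6)$ on the set \eqref{eq3}. Since $l$ is monomial it preserves collinearity, and since it permutes $\{P_3,P_4,P_5,P_6\}$ as a whole, the non-collinearity of a single $Q_j$ with every $P_i$, $i\in\{3,4,5,6\}$, propagates under the powers of $l$ to all four values of $j$. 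Hence it remains only to treat $Q_3=\langle 1,4,0,3,2,1\rangle$.

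Finally I would verify by inspection that $Q_3$ is collinear to none of $P_3,P_4,P_5,P_6$: in each matrix $\genfrac[]{0pt}{2}{Q_3}{P_i}$ there is either a zero column or a pair of proportional columns, which is precisely the failure of the collinearity criterion from Section~2. For instance $Q_3$ and $P_3=\langle 1,4,0,1,3,2\rangle$ both vanish in the third coordinate, producing a zero column; the three remaining pairs exhibit two proportional columns. As this last step is a finite check over four pairs of vectors in $\mathbb{F}_5^6$, there is no genuine obstacle here. The point requiring care is rather the bookkeeping: one must ensure that the labelling of the $Q_i$ is matched to that of the $P_i$ so that the single $4$-cycle $l$ governs both families at once, and one must invoke the intrinsic description of \eqref{eq3} to make the reduction to $(P_1,P_2)$ legitimate.
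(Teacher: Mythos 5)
Your proof is correct, but it follows a genuinely different route from the paper's. The paper argues synthetically: for a point $T$ on $\langle P,Q\rangle$ distinct from $P,Q$, statement (2) of Proposition \ref{propF} says $I(P),I(Q),I(T)$ span a plane, so the lines $\langle I(P),I(Q)\rangle$ and $\langle I(P),I(T)\rangle$ meet only in $I(P)$; applying the involution $I$, the sets $I(\langle I(P),I(Q)\rangle)$ and $I(\langle I(P),I(T)\rangle)$ meet only in $P$, and since by Lemma \ref{lemma-4p} the second set (minus $\{P,T\}$) is exactly the set of simplex points collinear to both $P$ and $T$ and off the line, no point of \eqref{eq3} can be collinear to $T$. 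You instead normalize to the canonical pair $(P_1,P_2)$, use the cyclic automorphism $l$ to reduce to the single point $Q_3$, and check the four matrices $\genfrac[]{0pt}{2}{Q_3}{P_i}$ by hand; I verified that each of these does contain a zero or proportional pair of columns ($Q_3,P_3$ share a zero third coordinate; for $P_4$ columns $1$ and $6$ coincide; for $P_5$ column $4$ is proportional to column $1$; for $P_6$ column $4$ equals twice column $2$), and your two reduction steps (transport of the intrinsic description of \eqref{eq3} under a monomial automorphism, and propagation along the $4$-cycles) are both legitimate. What the paper's argument buys is independence from explicit coordinates beyond what is already in Lemma \ref{lemma-4p}, and it isolates the real mechanism (the plane spanned by $I(P),I(Q),I(T)$); what your argument buys is a completely elementary, directly checkable verification that does not invoke Proposition \ref{propF}(2) at this stage. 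Both are sound.
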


\begin{proof}
Let $T$ be a point on the simplex line $\langle P,Q\rangle$ distinct from $P,Q$. 
By the statement (2) of Proposition \ref{propF},
$I(P),I(Q),I(T)$ span a projective plane and, consequently,
$$\langle I(P),I(Q)\rangle\cap \langle I(P),I(T)\rangle=\{I(P)\}.$$
Since $I$ is an involution, we have
$$I(\langle I(P),I(Q))\rangle\cap I(\langle I(P),I(T)\rangle)=\{P\}$$
which means that the set \eqref{eq3} does not contain points collinear to $T$.
\end{proof}

\begin{prop}\label{prop-top1}
Every top of $\Gamma$ contains precisely $4$  lines.
\end{prop}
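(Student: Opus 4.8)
The plan is to identify a top with the set of simplex lines lying in a fixed plane, to show that any such configuration contains a triangle which a monomial automorphism normalizes to an explicit one, and then to count the simplex lines in the resulting explicit plane.

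First I would unwind the definition. A top of $\Gamma$ is the set of all simplex lines contained in some plane $\pi$, and this set is by hypothesis a maximal clique. It cannot be a pencil: if all simplex lines in $\pi$ passed through a common simplex point $P$, they would form a proper subset of the star of $P$ (a star has $(q-1)!=24$ lines, whereas a plane through $P$ carries at most $q+1=6$ of them), contradicting maximality. Hence among the simplex lines of $\pi$ some three are not concurrent, and then they meet pairwise in three distinct simplex points $A,B,C$ spanning $\pi$. Thus every top contains a triangle of simplex lines, and to prove the proposition it suffices to count the simplex lines in $\pi=\langle A,B,C\rangle$.

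Next I would normalize. The points $A,B$ are collinear simplex points, and $C$ is collinear to both $A,B$ and off $\langle A,B\rangle$, so by Lemma \ref{lemma-4p} it is one of the four common neighbours of $A,B$. As in the proof of Lemma \ref{lemma-4p}, a monomial automorphism carries the pair $(A,B)$ to $(P_1,P_2)$, and the monomial automorphism $l$ from that proof cyclically permutes the four common neighbours $Q_3,Q_4,Q_5,Q_6$; composing, a monomial automorphism sends $(A,B,C)$ to $(P_1,P_2,Q_3)$ and hence $\pi$ to $\pi_0:=\langle P_1,P_2,Q_3\rangle$. Since monomial automorphisms preserve simplex lines, the number of simplex lines in $\pi$ equals the number in $\pi_0$, so it is enough to count simplex lines in the single explicit plane $\pi_0$.

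Finally I would carry out the count in $\pi_0$. A line is a simplex line precisely when every one of its six points has exactly one zero coordinate, so writing a general point of $\pi_0$ as $\langle \lambda a+\mu b+\nu c\rangle$ with $a,b,c$ representatives of $P_1,P_2,Q_3$, one lists the simplex points of $\pi_0$ and then the lines all of whose points are simplex. The three sides $\langle P_1,P_2\rangle$, $\langle P_1,Q_3\rangle$, $\langle P_2,Q_3\rangle$ are simplex, and the computation should yield exactly one further simplex line, giving four in total. Here Lemma \ref{lemmaF5} is a useful constraint: the six points $I(\langle I(P_1),I(P_2)\rangle)=\{P_1,P_2,Q_3,Q_4,Q_5,Q_6\}$ admit no four coplanar, so $Q_4,Q_5,Q_6\notin\pi_0$, and the analogous statements for the other two sides show that inside $\pi_0$ each side has its opposite vertex as the unique off-side common neighbour of its endpoints; combined with Lemma \ref{lemma-4p2}, this rigidity restricts how an extra line could sit. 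The main obstacle is exactly this last step, namely ruling out a fifth simplex line: I expect the rigidity above to cut down the casework, but the clean conclusion ``precisely four'' ultimately rests on the finite verification over $\mathbb{F}_5$ in the $31$-point plane $\pi_0$.
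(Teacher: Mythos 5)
Your reduction is sound as far as it goes: the observation that a top cannot be a pencil (the at most $q+1=6$ lines of $\pi$ through a common simplex point sit inside a star of $(q-1)!=24$ lines, so such a clique is not maximal), the extraction of a triangle $A,B,C$, and the normalization via Lemma~\ref{lemma-4p} and the automorphism $l$ to the single explicit plane $\pi_0=\langle P_1,P_2,Q_3\rangle$ are all correct, and they legitimately reduce the proposition to a finite statement about one plane over $\mathbb{F}_5$. The gap is that you then stop: the entire content of the proposition --- that $\pi_0$ contains exactly four simplex lines --- is deferred to a computation you only describe as what ``should'' happen. Neither the existence of a fourth simplex line in $\pi_0$ nor the impossibility of a fifth is actually established; your remarks that Lemma~\ref{lemmaF5} forces $Q_4,Q_5,Q_6\notin\pi_0$ and that Lemma~\ref{lemma-4p2} constrains extra lines are correct as stated, but they are not assembled into an argument that closes either question.

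For comparison, the paper avoids the explicit count entirely. Starting from the triangle $P,Q,T$, it takes the complementary three points $P',Q',T'$ of $I(\langle I(P),I(Q)\rangle)$ and notes that the planes $\langle P,Q,T\rangle$ and $\langle P',Q',T'\rangle$ are distinct (Lemma~\ref{lemmaF5}) and both lie in the $3$-dimensional space $H(P)\cap H(Q)$ (Lemma~\ref{lemmaH}); their intersection is therefore a line $L$, whose six points are exactly the intersections of $L$ with the sides of the two triangles, hence all simplex --- this produces the fourth line constructively. A fifth line is then excluded by a dimension count: if a line of the top met $\langle P,Q\rangle$ in a point $X\notin\{P,Q,A\}$, where $A=L\cap\langle P,Q\rangle$, then by Lemma~\ref{lemmaF5} the four hyperplanes $H(P),H(Q),H(A),H(X)$ would meet in a line, which cannot contain the plane $\langle P,Q,T\rangle$; Lemma~\ref{lemma-4p2} finishes the remaining cases. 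If you wish to keep your route, you must actually list the simplex points of the $31$-point plane $\pi_0$ and exhibit the fourth line and exclude a fifth; otherwise the decisive step needs an argument of the paper's synthetic kind.
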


\begin{proof}
Every top of $\Gamma$ contains three lines which pairwise intersect in $3$ mutually distinct simplex points $P,Q,T$. 
These points span a projective plane.  Since $T$ is collinear to both $P,Q$, it belongs to $I(\langle I(P),I(Q)\rangle)$.
Let $P',Q',T'$ be the points of $I(\langle I(P),I(Q)\rangle)$ distinct from $P,Q,T$. 
Recall that the points from $I(\langle I(P),I(Q)\rangle)$ are mutually collinear. 
Lemma \ref{lemmaH} shows that every point from $I(\langle I(P),I(Q)\rangle)$ is contained in 
the intersection of the hyperplanes $H(P)$ and $H(Q)$.
By Lemma \ref{lemmaF5},
the projective planes  $$\langle P,Q,T\rangle\;\mbox{ and }\; \langle P',Q',T'\rangle$$
are distinct. 
These planes are contained in the $3$-dimensional projective space $H(P)\cap H(Q)$ 
and, consequently, their intersection is a certain (not necessarily simplex) line $L$. 
Since each of the points $P,Q,T$ doest not belong to $\langle P',Q',T'\rangle$ (Lemma \ref{lemmaF5}),
$L$ intersects the lines
\begin{equation}\label{eq4}
\langle P,Q\rangle,\;\langle P,T\rangle,\;\langle Q,T\rangle
\end{equation}
in three mutually distinct points $A,B,C$ (respectively) such that
$$\{P,Q,T\}\cap \{A,B,C\}=\emptyset,$$
see Fig.1(a).
Similarly, $L$ intersects the lines 
\begin{equation}\label{eq5}
\langle P',Q'\rangle,\;\langle P',T'\rangle,\;\langle Q',T'\rangle,
\end{equation}
in three mutually distinct points $A',B',C'$ (respectively) and
$$\{P',Q',T'\}\cap \{A',B',C'\}=\emptyset,$$
see Fig.1(b).
It follows from Lemma \ref{lemmaF5} that each of the lines \eqref{eq4} does not intersect the lines \eqref{eq5} which means that 
$$\{A,B,C\}\cap \{A',B',C'\}=\emptyset.$$
Therefore, the line $L$ is formed by the $6$ points $A,B,C,A',B',C'$. 
Each of these points is simplex and, consequently, $L$ is a simplex line belonging to our top.

\begin{center}
\begin{tikzpicture} 

\begin{scope}
\draw[fill=black] (90:2cm) circle (2pt);
\draw[fill=black] (-30:2cm) circle (2pt);
\draw[fill=black] (210:2cm) circle (2pt);
\draw (0,0) circle (1cm);

\draw (90:2cm)--(-30:2cm)--(210:2cm)--cycle;

\draw[fill=black] (-90:1cm) circle (1.5pt);
\draw[fill=black] (30:1cm) circle (1.5pt);
\draw[fill=black] (150:1cm) circle (1.5pt);

\draw[fill=white] (-30:1cm) circle (1.5pt);
\draw[fill=white] (90:1cm) circle (1.5pt);
\draw[fill=white] (210:1cm) circle (1.5pt);

\node at (-2.1cm,-1cm) {$P$};
\node at (2.1cm,-1cm) {$Q$};
\node at (0,-1.25cm) {$A$};

\node at (90:2.3cm) {$T$};

\node at (150:1.35cm) {$B$};
\node at (30:1.35cm) {$C$};

\node at (0:-0.8cm) {$L$};

\node at (0,-2cm) {(a)};
\end{scope}

\begin{scope}[xshift=6cm]
\draw[fill=black] (90:2cm) circle (2pt);
\draw[fill=black] (-30:2cm) circle (2pt);
\draw[fill=black] (210:2cm) circle (2pt);
\draw (0,0) circle (1cm);

\draw (90:2cm)--(-30:2cm)--(210:2cm)--cycle;

\draw[fill=white] (-90:1cm) circle (1.5pt);
\draw[fill=white] (30:1cm) circle (1.5pt);
\draw[fill=white] (150:1cm) circle (1.5pt);

\draw[fill=black] (-30:1cm) circle (1.5pt);
\draw[fill=black] (90:1cm) circle (1.5pt);
\draw[fill=black] (210:1cm) circle (1.5pt);

\node at (-2.1cm,-1cm) {$P'$};
\node at (2.1cm,-1cm) {$Q'$};
\node at (0,-1.25cm) {$A'$};

\node at (90:2.3cm) {$T'$};

\node at (150:1.35cm) {$B'$};
\node at (30:1.35cm) {$C'$};

\node at (0:-0.8cm) {$L$};

\node at (0,-2cm) {(b)};
\end{scope}

\end{tikzpicture}
\captionof{figure}{ }
\end{center}

Let $L'$ be a line belonging to the top.
This line intersects $\langle P,Q\rangle$ in a certain point $X$. 
Recall that $L$ intersects $\langle P,Q\rangle$ in $A$.
We assert that the plane $\langle P,Q,T\rangle$ is contained in
\begin{equation}\label{eq6}
H(P)\cap H(Q)\cap H(A)\cap H(X).
\end{equation}
It was noted above that  $\langle P,Q,T\rangle$ is contained in $H(P)\cap H(Q)$.
The lines $\langle P,Q\rangle$ and $L$ intersect in the simplex point  $A$ and, consequently,
these lines are contained in $H(A)$ by Lemma \ref{lemmaH}. 
Since the plane $\langle P,Q,T\rangle$ is spanned by $\langle P,Q\rangle$ and $L$, it is contained in $H(A)$.
Similarly, the lines  $\langle P,Q\rangle$ and $L'$ intersect in the simplex point $X$ and span the plane $\langle P,Q,T\rangle$
which means that $H(X)$ contains $\langle P,Q,T\rangle$.
Therefore, $\langle P,Q,T\rangle$ is contained in \eqref{eq6}.

If $X$ is distinct from $P,Q,A$, then, by Lemma \ref{lemmaF5}, $I(P),I(Q),I(A),I(X)$ span a $3$-dimensional projective space.
In this case, the projective space \eqref{eq6} is a line which contradicts the fact that 
it contains the plane  $\langle P,Q,T\rangle$.
Therefore, $L'$ intersects  $\langle P,Q\rangle$ in one of the points $P,Q,A$. 
Similarly,  we show that 
$L'$ intersects  $\langle P,T\rangle$ and $\langle Q,T\rangle$ in one of the points $P,T,B$ and $Q,T,C$, respectively.

By Lemma \ref{lemma-4p2}, the simplex points $A,B,C$ are non-collinear to the simplex points $T,Q,P$ (respectively).
Therefore, if $L'$ contains $A$, then it is $\langle P,Q\rangle$ or $L$. 
Similarly, we obtain that $L'$ is one of the lines  \eqref{eq4}  if it contains $P$ or $Q$.
\end{proof}

\begin{prop}\label{prop-ntops}
For every simplex line $L$ there is a one-to-one correspondence between 
tops of $\Gamma$ containing $L$ and triples of mutually distinct points on $L$. 
\end{prop}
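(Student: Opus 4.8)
The plan is to exhibit an explicit bijection
$$\Phi\colon\{\text{tops of }\Gamma\text{ containing }L\}\longrightarrow\{\text{3-element subsets of the six simplex points on }L\},$$
and then to read off the count $\binom{6}{3}=20$ for the last assertion of Theorem \ref{theorem-top-q5}. Given a top $\mathcal T$ containing $L$, Proposition \ref{prop-top1} tells us that $\mathcal T$ consists of exactly four lines forming a complete quadrilateral with six distinct vertices, so the three lines of $\mathcal T$ different from $L$ meet $L$ in three pairwise distinct simplex points; I define $\Phi(\mathcal T)$ to be this triple. Well-definedness amounts to the distinctness of the three intersection points, which follows from the six-vertex structure of Proposition \ref{prop-top1}: if two of the other lines met $L$ at a common point, that point together with $L$ would make three lines of $\mathcal T$ concurrent, contradicting the complete-quadrilateral configuration.

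The crucial step is injectivity, and this is where the hyperplanes $H(\cdot)$ do the work. Write $\{A,B,C\}=\Phi(\mathcal T)$ and let $\pi$ be the plane of $\mathcal T$. Through each of $A,B,C$ pass exactly two lines of $\mathcal T$, namely $L$ and one further simplex line; by Lemma \ref{lemmaH} each of these two lines lies inside $H(A)$ (respectively $H(B)$, $H(C)$), since its points are collinear with $A$ (respectively $B$, $C$). As the two lines through $A$ span $\pi$, we obtain $\pi\subseteq H(A)\cap H(B)\cap H(C)$. Now $A,B,C$ are three distinct points of the simplex line $L$, so by Proposition \ref{propF}(2) the points $I(A),I(B),I(C)$ span a plane; equivalently, the three linear forms defining $H(A),H(B),H(C)$ are independent, whence $H(A)\cap H(B)\cap H(C)$ is itself a plane. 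Comparing dimensions forces $\pi=H(A)\cap H(B)\cap H(C)$. Thus the plane of $\mathcal T$, and therefore $\mathcal T$ itself — being the set of all simplex lines in that plane — is determined by the triple $\{A,B,C\}$, so $\Phi$ is injective.

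For surjectivity I fix an arbitrary triple $\{A,B,C\}$ on $L$ and focus on the pair $A,B$. By Lemma \ref{lemma-4p} there are exactly four simplex points $T_1,\dots,T_4$ collinear with both $A$ and $B$ and not lying on $L=\langle A,B\rangle$. For each $T_i$ the points $A,B,T_i$ are pairwise collinear and span a plane, so by Theorem \ref{theorem-top-gen} the lines $L$, $\langle A,T_i\rangle$, $\langle B,T_i\rangle$ lie in a top $\mathcal T_i$ containing $L$; hence $\Phi(\mathcal T_i)$ contains both $A$ and $B$. These tops are pairwise distinct: in any top at most one line other than $L$ passes through $A$ and at most one through $B$, so $T_i=\langle A,T_i\rangle\cap\langle B,T_i\rangle$ is the unique off-$L$ vertex of $\mathcal T_i$ collinear with both $A,B$, and therefore $T_i\neq T_j$ yields $\mathcal T_i\neq\mathcal T_j$. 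By the injectivity just proved, the four triples $\Phi(\mathcal T_i)$ are distinct and all of the form $\{A,B,*\}$; since $L$ carries only four points besides $A,B$, these exhaust them and some $\mathcal T_i$ satisfies $\Phi(\mathcal T_i)=\{A,B,C\}$.

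Combining the three parts, $\Phi$ is a bijection, which is exactly the claimed one-to-one correspondence; in particular the number of tops containing $L$ equals $\binom{6}{3}=20$. I expect the main obstacle to be the injectivity step — precisely the identification of the containing plane as $H(A)\cap H(B)\cap H(C)$ — because it is there that the independence furnished by Proposition \ref{propF}(2) (valid since $q=5$) is indispensable; by contrast, the distinctness bookkeeping in the surjectivity step is routine once Lemma \ref{lemma-4p} supplies the four common neighbours.
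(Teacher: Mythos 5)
Your proof is correct and follows essentially the same route as the paper: injectivity by showing the top's plane equals $H(A)\cap H(B)\cap H(C)$ via Lemma \ref{lemmaH} and Proposition \ref{propF}(2), and surjectivity by producing four distinct tops through $\{A,B\}$ from the four common neighbours supplied by Lemma \ref{lemma-4p} and exhausting the four possible third points. The only cosmetic difference is that you certify distinctness of those four tops via the quadrilateral structure from Proposition \ref{prop-top1}, where the paper invokes Lemma \ref{lemmaF5}.
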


\begin{proof}
Consider a top of $\Gamma$ containing $L$.
Let $L_i$, $i\in \{1,2,3\}$ be the remaining $3$ lines belonging to this top.
The line $L_i$ intersects $L$ in the point denoted by $P_i$.
The points $P_{i}$, $i\in \{1,2,3\}$ are mutually distinct (Fig.2).
Since $I(P_1),I(P_2),I(P_3)$ span a projective plane (Proposition \ref{propF}),
\begin{equation}\label{eq7}
H(P_1)\cap H(P_2)\cap H(P_3)
\end{equation}
is a projective plane.
For any (not necessarily distinct) $i,j\in\{1,2,3\}$
the line $L_i$ contains the point $P_j$ or it contains two points collinear to $P_j$ (see Fig.2)
and, consequently, $L_i$ is contained in $H(P_j)$ by Lemma \ref{lemmaH}.
Therefore, every line of the top is contained in the plane \eqref{eq7}, i.e. the top is completely determined by the points $P_{i}$, $i\in \{1,2,3\}$.
\begin{center}
\begin{tikzpicture} 

\draw[fill=black] (90:2cm) circle (2pt);
\draw[fill=black] (-30:2cm) circle (2pt);
\draw[fill=black] (210:2cm) circle (2pt);
\draw (0,0) circle (1cm);

\draw (90:2cm)--(-30:2cm)--(210:2cm)--cycle;

\draw[fill=black] (-90:1cm) circle (1.5pt);
\draw[fill=black] (30:1cm) circle (1.5pt);
\draw[fill=black] (150:1cm) circle (1.5pt);

\node at (-2.1cm,-1cm) {$P_1$};
\node at (2.1cm,-1cm) {$P_2$};
\node at (0,-1.3cm) {$P_3$};

\node at (30:-0.75cm) {$L_3$};

\node at (115:1.6cm) {$L_1$};
\node at (60:1.6cm) {$L_2$};
\node at (-50:1.6cm) {$L$};

\end{tikzpicture}
\captionof{figure}{ }
\end{center}

Let $P,Q,T$ be three mutually distinct points on $L$.
Suppose that $$I(\langle I(P),I(Q)\rangle)\setminus \{P,Q\}=\{N_1,N_2,N_3,N_4\}.$$
Then for every $i\in \{1,2,3,4\}$ the projective plane spanned by the line $L$ and the point $N_i$ defines a top containing $L$.
The remaining $3$  lines belonging to this top intersect $L$ in $P,Q$ and a third point denoted by $T_i$.
Since $P,Q,N_i,N_j$ ($i\ne j$) span a $3$-dimensional projective space (Lemma \ref{lemmaF5}), 
the tops corresponding to distinct $N_i$ and $N_j$ are distinct. 
It was established above that the plane spanned by $L$ and $N_i$ is
$$H(P)\cap H(Q)\cap H(T_i).$$
For distinct $i,j\in \{1,2,3,4\}$ the points $T_i,T_j$ are distinct 
(otherwise, distinct $N_i$ and $N_j$ defines the same top). 
This means that $T=T_i$ for a certain $i\in \{1,2,3,4\}$.
\end{proof}

Lemma \ref{lemma3p} and Proposition \ref{prop-ntops} show that for any two tops of $\Gamma$
there is a monomial linear automorphism of $V$ transferring one of these tops to the other.
Also,  Proposition \ref{prop-ntops} implies that every simplex line is contained in precisely $\binom{6}{3}=20$ tops of $\Gamma$.

\section{Remarks on the case $q>5$}
If $P,Q$ are collinear simplex points, then every point from the set
\begin{equation}\label{set}
I(\langle I(P),I(Q)\rangle)\setminus\{P,Q\}
\end{equation}
is collinear to both $P,Q$ and does not belong to the line $\langle P,Q\rangle$ (Proposition \ref{propF}). 
In the case $q=5$, this is the set of all simplex points which are collinear to both $P,Q$ and not on the line $\langle P,Q\rangle$.
The below examples show that for $q=7,11$ this fails, i.e.  there are simplex points which are collinear to both $P,Q$
and do not belong to  \eqref{set} and  $\langle P,Q\rangle$.
Furthermore, for $q=11$ there are tops of $\Gamma$ containing precisely $3$ lines.

Using computing (Scilab) we obtain the following examples.

\begin{exmp}{\rm
Let $q=7$. Then $n=8$. Consider the vectors
$$x=(0, 1, 1, 1, 1, 1, 1, 1)\;,y=(1, 0, 1, 3, 2, 6, 4, 5),$$
$$z=(1, 2, 3, 4, 1, 5, 6, 0)\;,z'=(1, 2, 6, 1, 4, 3, 5, 0).$$
Then $P=\langle x\rangle$ and  $Q=\langle y\rangle$ are collinear simplex points.
The simplex points $T=\langle z \rangle$ and $T'=\langle z'\rangle$ 
are collinear to both $P,Q$ and not on the line $\langle P,Q\rangle$.
The set \eqref{set} contains $T$ and does not contain $T'$.
The planes $\langle P,Q,T\rangle$ and $\langle P,Q,T'\rangle$
define distinct tops of $\Gamma$. Each of these tops contains precisely $4$ lines. 
Three lines of the first top are spanned by pairs of distinct $X,Y\in \{P,Q,T\}$
and the fourth line intersects $\langle P,Q\rangle$ in the point
$$R=\langle 5x+y \rangle=\langle 1, 5, 6, 1, 0, 4, 2, 3\rangle.$$
Similarly, three lines of the second top are spanned by pairs of distinct $X,Y\in \{P,Q,T'\}$
and the fourth line intersects $\langle P,Q\rangle$ in the same point $R$.
So, $\langle P,Q\rangle$ belongs to the both tops and the remaining lines of the tops  intersect this line  in
$P,Q,R$. Note that for $q=5$ such possibility is not realized (Proposition \ref{prop-ntops}).
}\end{exmp}

\begin{exmp}{\rm
Let $q=11$. Then $n=12$. Consider the collinear simplex points
$$P=\langle 0, 1, 1, 1, 1, 1, 1, 1, 1, 1, 1, 1\rangle,\; Q=\langle 1, 0, 1, 2, 4, 8, 5, 10, 9, 7, 3, 6\rangle.$$
Each of the simplex points 
$$T=\langle 1, 10, 0, 9, 6, 2, 7, 5, 3, 8, 4, 1\rangle,\; T'=\langle 1, 10, 0, 3, 2, 9, 6, 7, 5, 8, 4, 1\rangle$$
is collinear to both $P,Q$ and not on the line $\langle P,Q\rangle$.
As in the previous example, \eqref{set} contains $T$ and does not contain $T'$.
The planes $\langle P,Q,T\rangle$ and $\langle P,Q,T'\rangle$
define distinct tops of $\Gamma$. In contrast to the previous example, each of these tops contains precisely $3$ lines. 
}\end{exmp}

\end{document}